\begin{document}
\setcounter{page}{1}
\noindent{ }
\vspace{20mm}

\begin{center}
{\Large \bf  On the Prime Numbers in the Interval [4n, 5n]}
\vspace{8mm}

{\large \bf Kyle D. Balliet}
\vspace{3mm}

140 Matthew Avenue, Palmerton, PA 18071, USA \\
e-mail: \url{kyleballiet@gmail.com}
\vspace{2mm}

\end{center}
\vspace{10mm}

\noindent
{\bf Abstract:} Is it true that for all $n\geq k\geq 2$ there exists a prime number between $kn$ and $(k+1)n$?  In this paper we show that there is always a prime number between $4n$ and $5n$ for all $n>2$.  We also show there are at least seven prime numbers between $n$ and $5n$ for all $n>5$. \\
\vspace{10mm}

\newtheorem{Theorem}{\noindent Theorem}[section]
\newtheorem{Corollary}[Theorem]{\noindent Corollary}
\newtheorem{Lemma}[Theorem]{\noindent Lemma}
\newtheorem{Question}[Theorem]{\noindent Question}
\newtheorem{Example}[Theorem]{\noindent Example}
\newtheorem{Conjecture}[Theorem]{\noindent Conjecture}

\section{Introduction} 

Bertrand's postulate is stated as:  For $n>1$ there is a prime number between $n$ and $2n$.  This, relatively simply stated, conjecture was stated by Joseph Bertrand in 1845 and was ultimately solved by Pafnuty Chebyshev in 1850, see \cite{2}.  Then in 1932 Paul Erd\H{o}s \cite{3, 4} showed the theorem using elementary properties and approximations of binomial coefficients and the Chebyshev functions:
\[\vartheta(x)=\sum_{p\leq x}\log p,\quad \psi(x)=\sum_{\substack{p^k\leq x\\k\in\mathbb{N}}}\log p.\]

In 2006, Mohamed El Bachraoui \cite{1} showed that for $n>1$ there is a prime number between $2n$ and $3n$.  This similar result to Bertrand's postulated provided a smaller interval for a prime number to exist.  As an example, Bertrand's postulate guarantees $p\in(10,20)$ whereas Bachraoui guarantees $p\in(10,15)$.  Moreover, Bachraoui also questioned if there was a prime number between $kn$ and $(k+1)n$ for all $n\geq k\geq 2$.

Finally, in 2011, Andy Loo \cite{5} shortened these intervals to $p\in(3n,4n)$ for $n>1$.  A. Loo went on to prove that as $n$ approaches infinity the number of prime numbers between $3n$ and $4n$ also tends to infinity - a result which is implied by the prime number theorem.

Throughout this paper we let $p$ be a prime number, $n$ a natural number, $[x]$ as the floor of $x$, and $\log y$ as the logarithm of $y$ with base $e$.

\section{Lemmas}
We will extensively utilize the following three lemmas in the next section and so we present them here.
\begin{Lemma}
For all $x\geq 1$,
\[\sqrt{2\pi}\,x^{x+\frac{1}{2}}e^{-x+\frac{1}{12x+1}}\leq x!\leq \sqrt{2\pi}\,x^{x+\frac{1}{2}}e^{-x+\frac{1}{12x}}.\]
\end{Lemma}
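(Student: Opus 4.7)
The plan is to follow Robbins' standard derivation of the sharp Stirling bounds, applied to integer $n\geq 1$ (treating $x!$ in its usual factorial sense). Define $a_n := n!/(n^{n+1/2}e^{-n})$; the claim becomes $\sqrt{2\pi}\,e^{1/(12n+1)} \leq a_n \leq \sqrt{2\pi}\,e^{1/(12n)}$.

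First I would compute the telescoping increment
\[
\log a_n - \log a_{n+1} = \bigl(n+\tfrac{1}{2}\bigr)\log\!\bigl(1+\tfrac{1}{n}\bigr) - 1,
\]
and convert it into a tractable power series by substituting $t=1/(2n+1)$ into the identity $\log\!\left(\tfrac{1+t}{1-t}\right)=2\sum_{k\geq 0}\tfrac{t^{2k+1}}{2k+1}$. This yields the clean representation
\[
\log a_n - \log a_{n+1} = \sum_{k=1}^{\infty}\frac{1}{(2k+1)(2n+1)^{2k}},
\]
which is the workhorse for both inequalities.

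Next I would extract two monotonicity statements. For the upper bound, I would majorize the series by the geometric tail $\tfrac{1}{3(2n+1)^2}\cdot\bigl(1-(2n+1)^{-2}\bigr)^{-1}$, which simplifies to $\tfrac{1}{12n}-\tfrac{1}{12(n+1)}$; this shows $\log a_n - 1/(12n)$ is increasing in $n$. For the lower bound, I would keep just the leading term $\tfrac{1}{3(2n+1)^2}$ and verify by a short algebraic manipulation (which reduces to $24n\geq 23$) that it exceeds $\tfrac{1}{12n+1}-\tfrac{1}{12(n+1)+1}$; this shows $\log a_n - 1/(12n+1)$ is decreasing. Finally, I would invoke Wallis' product $\prod_{k\geq 1}\tfrac{(2k)^2}{(2k-1)(2k+1)} = \tfrac{\pi}{2}$, applied to the even central binomial coefficient, to identify $\lim_{n\to\infty} a_n = \sqrt{2\pi}$. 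Combined with the two monotonicity results, this common limit immediately gives the claimed two-sided sandwich.

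The main obstacle is really the lower bound: an uncritical geometric comparison only recovers the weaker $1/(12n)$, and confirming that the isolated leading term already dominates $\tfrac{1}{12n+1}-\tfrac{1}{12(n+1)+1}$ requires the short but non-obvious inequality above. A secondary subtlety is that the limiting constant $\sqrt{2\pi}$ itself does not drop out of the monotonicity analysis — it has to be imported from Wallis' product (or an equivalent independent computation).
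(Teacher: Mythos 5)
Your argument is exactly Robbins' proof, which is all the paper offers here (Lemma 2.1 is proved by citation to reference [6], Robbins 1955): the telescoping representation $\log a_n-\log a_{n+1}=\sum_{k\geq1}\tfrac{1}{(2k+1)(2n+1)^{2k}}$, the geometric majorization for the upper bound, the leading-term comparison reducing to $24n>23$ for the lower bound, and Wallis' product to pin down the constant $\sqrt{2\pi}$ are precisely his steps. The one caveat is that the lemma is stated for all real $x\geq1$ while your proof (like Robbins' original) covers only integer arguments; this is harmless here because the paper only ever applies the factorial bounds at integers, using its Lemma 2.3 to handle the non-integer evaluations of the bounding functions.
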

\begin{proof}
See \cite{6}.
\end{proof}

\begin{Lemma}
Let $r$ and $s$ be real numbers satisfying $s>r\geq 1$ and ${s \brace r}=\delta(r,s)\binom{[s]}{[r]}$, then $1\leq \delta(r,s)\leq s$.
\end{Lemma}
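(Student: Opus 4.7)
My plan is to unpack ${s \brace r}$ and $\binom{[s]}{[r]}$ in closed form and study their ratio $\delta(r,s)$ directly. Writing $s = [s]+\sigma$ and $r = [r]+\rho$ with $\sigma,\rho\in[0,1)$, I would apply the functional equation $\Gamma(x+1)=x\,\Gamma(x)$ to separate the integer contributions (which cancel against $\binom{[s]}{[r]}$) from the fractional ones, so that $\delta(r,s)$ reduces to a short explicit product of $\Gamma$ values at arguments in $[1,2)$ multiplied by finitely many rational factors coming from the floors.

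For the lower bound $\delta(r,s)\geq 1$, I would pair factors so that each contributes a term of the form $(n+t)/n$ with $n$ a positive integer and $t\in[0,1)$, which is automatically at least $1$. This pairing splits into the subcases $\sigma\geq\rho$ and $\sigma<\rho$, because these determine whether $[s-r]=[s]-[r]$ or $[s]-[r]-1$. In each case, the log-convexity of $\Gamma$ (the Bohr--Mollerup property) controls any leftover $\Gamma(1+\cdot)$ terms and delivers $\delta\geq 1$.

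For the upper bound $\delta(r,s)\leq s$, after the same pairing only a small number of unmatched factors remain, and I expect the dominant contribution to look like $\Gamma(s+1)/\Gamma([s]+1)$, which is at most $s$ by the monotonicity of $\Gamma$ on $[2,\infty)$. The main obstacle I foresee is the bookkeeping required in the case $\sigma<\rho$: here $[s-r]$ differs from $[s]-[r]$ by one, and an extra integer factor must be shuffled between numerator and denominator. The generosity of the upper bound $s$ (rather than something sharper like $[s]+1$) makes me think the author accepts some loss in this case, so a slightly lossy manipulation---possibly aided by the quantitative Stirling estimate of Lemma 2.1---should suffice to close out both inequalities.
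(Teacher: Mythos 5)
Your plan attacks a different object than the one the paper has in mind, and for your object the statement is actually false, so the gap here is not bookkeeping but the definition itself. The paper defines ${s \brace r}$ inside the proof as a ratio of products of \emph{integers}: the numerator is $\prod_{k\in(s-r,s]\cap\mathbb{N}}k=[s]!/[s-r]!$ and the denominator is $[r]!$. No Gamma function, no Stirling, no log-convexity enters. The whole proof is then a two-case count of how many integers lie in $(s-r,s]$, namely $[s]-[s-r]$: if $\{s\}\geq\{r\}$ then $[s-r]=[s]-[r]$, the quotient is exactly $\binom{[s]}{[r]}$, and $\delta=1$; if $\{s\}<\{r\}$ then $[s-r]=[s]-[r]-1$, the quotient is $([s]-[r])\binom{[s]}{[r]}$, and $\delta=[s-r]+1$, which satisfies $1\leq[s-r]+1\leq s-r+1\leq s$ because $r\geq1$. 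In particular $\delta(r,s)$ is always a positive integer, which is essential later in the paper, where $A$ and $B$ must be integers divisible by certain products of primes.

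Under your reading, with ${s\brace r}=\Gamma(s+1)/\bigl(\Gamma(r+1)\Gamma(s-r+1)\bigr)$, the lower bound fails: take $s=2.1$ and $r=1.9$, so $\Gamma(3.1)/\bigl(\Gamma(2.9)\Gamma(1.2)\bigr)\approx 1.31$ while $\binom{2}{1}=2$, giving $\delta\approx 0.65<1$. So the pairing-plus-convexity strategy cannot close the lower bound no matter how the leftover $\Gamma(1+\cdot)$ terms are handled. Separately, even granting the intended definition, your writeup is a plan rather than a proof: the pairing in the two subcases is never carried out, and the appeal to Lemma 2.1 is unnecessary for a statement this coarse. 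The fix is simply to adopt the integer-product definition and run the two-line case analysis above.
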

\begin{proof}
Let $z>0$ and let $[z]$ be the greatest integer less than or equal to $z$.  Define $\{z\}=z-[z]$.  Let $r$ and $s$ be real numbers satisfying $s>r\geq1$.  Observe that the number of integers in the interval $(s-r,s]$ is $[s]-[s-r]$, which is $[r]$ if $\{s\}\geq\{r\}$ and $[r]+1$ if $\{s\}<\{r\}$.  Let $\mathbb{N}$ be the set of all natural numbers and define
\[{s \brace r}=\frac{\displaystyle\prod_{k\in(s-r,s]\cap\mathbb{N}}\mkern-24mu k}{\displaystyle\prod_{k\in(0,r]\cap\mathbb{N}}\mkern-16mu k}=\delta(r,s)\binom{[s]}{[r]}\]
\noindent where $\delta(r,s)=1$ if $\{s\}\geq\{r\}$ and $\nolinebreak{\delta(r,s)=[s-r]+1}$ if $\{s\}<\{r\}$.  In either case, $1\leq \delta(r,s)\leq s$.
\end{proof}

\begin{Lemma}
The following are true:
\begin{enumerate}[font=\normalfont]
\item Let $c\geq\frac{1}{12}$ be a fixed constant.  For $x\geq\frac{1}{2}$, $\frac{u(x+c)}{l(c)l(x)}$ is increasing.
\item Let $c$ be a fixed positive constant and define $h_2(x)=\frac{u(c)}{l(x)l(c-x)}$.  Then $h'_2(x)>0$ when $\frac{1}{2}\leq x\leq \frac{c}{2}$, $h'_2(x)=0$ when $x=\frac{c}{2}$, and $h'_2(x)<0$ when $\frac{c}{2}<x\leq c-\frac{1}{2}$.
\end{enumerate}
\end{Lemma}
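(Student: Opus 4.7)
I interpret $u$ and $l$ as the upper and lower Stirling bounds from Lemma 2.1, namely
\[\log u(y) = \tfrac{1}{2}\log(2\pi) + (y+\tfrac{1}{2})\log y - y + \tfrac{1}{12y}, \qquad \log l(y) = \tfrac{1}{2}\log(2\pi) + (y+\tfrac{1}{2})\log y - y + \tfrac{1}{12y+1}.\]
Both quantities in the lemma are products of smooth positive functions, so each assertion reduces to a sign check on a logarithmic derivative.

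For part 1, set $F(x) = \log u(x+c) - \log l(c) - \log l(x)$. Routine differentiation yields
\[F'(x) = \Bigl(\log\tfrac{x+c}{x} + \tfrac{1}{2(x+c)} - \tfrac{1}{2x}\Bigr) + \Bigl(\tfrac{12}{(12x+1)^2} - \tfrac{1}{12(x+c)^2}\Bigr).\]
I will show both parenthesized groups are nonnegative on $x\geq\tfrac{1}{2}$, with strict positivity of the first when $c>0$. For the first, I set $\psi(y) = \log y + 1/(2y)$ and observe that the group equals $\psi(x+c)-\psi(x) = \int_x^{x+c}\frac{2y-1}{2y^2}\,dy$, whose integrand is nonnegative for $y\geq 1/2$. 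For the second, the hypothesis $c\geq 1/12$ is exactly what forces $12x+1\leq 12x+12c = 12(x+c)$, whence $12/(12x+1)^2 \geq 12/[12(x+c)]^2 = 1/[12(x+c)^2]$. Combining gives $F'(x)>0$, so $e^{F(x)} = u(x+c)/[l(c)l(x)]$ is strictly increasing.

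For part 2, differentiating $\log h_2(x) = \log u(c) - \log l(x) - \log l(c-x)$ gives $h_2'(x)/h_2(x) = G(c-x) - G(x)$, where
\[G(y) := (\log l(y))' = \log y + \tfrac{1}{2y} - \tfrac{12}{(12y+1)^2}.\]
Symmetry immediately yields $h_2'(c/2)=0$. For the remaining cases I will show $G$ is strictly increasing on $[1/2,\infty)$ by computing
\[G'(y) = \frac{2y-1}{2y^2} + \frac{288}{(12y+1)^3},\]
which is manifestly positive for $y\geq 1/2$. Hence $G(c-x)-G(x)$ shares the sign of $c-2x$, and since $h_2>0$ this transfers directly to $h_2'$. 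The main obstacle is part 1: the Stirling correction term $12/(12x+1)^2 - 1/[12(x+c)^2]$ is not uniformly positive for arbitrary $c$, and the clean decomposition above works only because the threshold $c\geq 1/12$ was imposed precisely to make this piece nonnegative. Part 2 is then a straightforward symmetry-plus-monotonicity argument.
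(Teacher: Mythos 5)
Your proof is correct: with $u$ and $l$ read off from Lemma 2.1 as the Robbins upper and lower factorials, your logarithmic derivatives $\frac{d}{dy}\log u(y)=\log y+\frac{1}{2y}-\frac{1}{12y^2}$ and $\frac{d}{dy}\log l(y)=\log y+\frac{1}{2y}-\frac{12}{(12y+1)^2}$ are right, the hypothesis $c\geq\frac{1}{12}$ is used exactly where it is needed (to get $12x+1\leq 12(x+c)$), and the symmetry-plus-monotonicity argument for $h_2$ is sound. The paper itself gives no argument here, deferring to Lemmas 1.4 and 1.5 of Loo's paper, which prove the statements by the same log-differentiation technique, so your write-up simply supplies in full the details the paper outsources.
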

\begin{proof}
See Lemmas 1.4 and 1.5 of \cite{5}.
\end{proof}

\section{Primes between 4n and 5n}
In this section we will show that there is always a prime number in the interval $(4n, 5n)$ for any positive integer $n>2$.  In order to do so, we will begin by showing two inequalities which will be vital to the main proof which follows.

For our main proof we will consider the binomial coefficient $\binom{5n}{4n}$ and subdivide the prime numbers composing $\binom{5n}{4n}$ into three products based on their size.  That is, we will categorize the prime numbers as
\[T_{1}=\prod_{p\leq \sqrt{5n}}\mkern-6mu p^{\beta(p)},\quad T_{2}=\prod_{\sqrt{5n} < p\leq 4n}\mkern-18mu p^{\beta(p)},\quad\text{and } T_{3}=\prod_{4n+1\leq p\leq 5n}\mkern-20mu p\]
so that
\[\binom{5n}{4n}=T_1 T_2 T_3.\]

We will then show that $T_3=\binom{5n}{4n}\frac{1}{T_1 T_2}$ is greater than 1 and therefore there is at least one prime number in $T_3$.  That is, there is at least one prime number in the interval $(4n, 5n)$.

\begin{Lemma}
The following inequalities hold:
\begin{enumerate}[font=\normalfont]
\item For $n\geq 6818$, $e^{\sfrac{1}{(60n+1)}-\sfrac{1}{48n}-\sfrac{1}{12n}}\geq 0.999986$.
\item For $n\geq 1$, $e^{\sfrac{1}{30n}-\sfrac{1}{(24n+1)}-\sfrac{1}{(6n+1)}}\leq 1$.
\item For $n\geq 1$, $e^{\sfrac{1}{20n}-\sfrac{1}{(16n+1)}-\sfrac{1}{(4n+1)}}\leq 1$.
\item For $n\geq 6815$, $\frac{4n+3}{n-3}<4.002202$.
\end{enumerate}
\end{Lemma}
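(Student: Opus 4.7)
The plan is to dispatch each of the four inequalities by a simple monotonicity argument together with a direct numerical check at the stated threshold. Each exponent is a rational function of $n$ tending to $0$ or to a limit at infinity, so the only real work is to fix the sign of the derivative and then evaluate at the boundary value of $n$.

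For parts (2) and (3), I would bypass monotonicity entirely by observing that a single negative term already beats the positive one: since $6n+1<30n$ for $n\geq 1$, we have $\frac{1}{6n+1}>\frac{1}{30n}$, so $\frac{1}{30n}-\frac{1}{24n+1}-\frac{1}{6n+1}<0$, and the same argument using $\frac{1}{4n+1}>\frac{1}{20n}$ handles (3). In both cases the exponent is strictly negative, so the exponential is $<1$. Part (4) reduces to rewriting $\frac{4n+3}{n-3}=4+\frac{15}{n-3}$, which is strictly decreasing in $n$, so it suffices to verify at $n=6815$; there $\frac{15}{6812}<0.002202$ is checked directly from $6812\cdot 0.002202>15$.

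Part (1) is the only substantive one. I would set $f(n)=\frac{1}{60n+1}-\frac{1}{48n}-\frac{1}{12n}$ and show $f$ is increasing on $n\geq 1$ by bounding $\frac{60}{(60n+1)^{2}}<\frac{1}{60n^{2}}$ in the derivative, which reduces the sign of $f'(n)$ to the sign of $-\frac{1}{60}+\frac{1}{48}+\frac{1}{12}=\frac{21}{240}>0$. Since $f(n)\to 0$ from below, $e^{f(n)}$ is increasing and tends to $1$, so the claim reduces to evaluating at $n=6818$. A direct computation gives $f(6818)\approx -1.283\times 10^{-5}$, so $e^{f(6818)}\geq 1-1.29\times 10^{-5}>0.999986$.

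The only real obstacle is numerical precision at the threshold in part (1): the bound $0.999986$ is tight to the last displayed digit, so the evaluation of $f(6818)$ must be carried out with enough accuracy to cleanly separate $e^{f(6818)}$ from $0.999986$, and care is needed that the chosen threshold $n=6818$ is actually the smallest one for which this holds. The monotonicity arguments in all four parts, by contrast, are routine calculus.
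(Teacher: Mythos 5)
Your proposal is correct and follows essentially the same strategy as the paper: reduce each part to a monotonicity statement plus a numerical check at the threshold (the paper combines the exponent in (1) into the single decreasing fraction $\tfrac{252n+5}{2880n^2+48n}$ and evaluates at $n=6818$, while you differentiate $f$ directly, and it clears denominators in (2)--(4) where you use termwise domination and the rewriting $4+\tfrac{15}{n-3}$, but these are cosmetic differences). One small remark: your closing worry about $n=6818$ being the \emph{smallest} valid threshold is unnecessary, since the lemma only asserts the inequality for all $n\geq 6818$, not minimality.
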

\begin{proof}(1).  The following inequalities are equivalent for $\nolinebreak{n\geq6818}$:
\begin{align*}
e^{\sfrac{1}{(60n+1)}-\sfrac{1}{48n}-\sfrac{1}{12n}}&\geq0.999986\\
\tfrac{1}{60n+1}-\tfrac{1}{48n}-\tfrac{1}{12n}&\geq\log(0.999986)\\
\tfrac{252n+5}{2880n^{2}+48n}&\leq-\log(0.999986)\approx0.000014.
\end{align*}
Now the left-hand side is decreasing in $n$ and so it suffices to verify the case when $n=6818$.  When $n=6818$, we obtain
\begin{align*}
\frac{1718141}{133877484384}&<0.000013\\
&<-\log(0.999986).
\end{align*}

\noindent\textit{Proof} (2).  The following inequalities are equivalent for $n\geq1$:
\begin{align*}
e^{\sfrac{1}{30n}-\sfrac{1}{(24n+1)}-\sfrac{1}{(6n+1)}}&\leq1\\
\tfrac{1}{30n}-\tfrac{1}{24n+1}-\tfrac{1}{6n+1}&\leq0\\
1&\leq756n^{2}+30n.
\end{align*}
Which clearly holds for all $n\geq1$.

\noindent\textit{Proof} (3).  The following inequalities are equivalent for $n\geq1$:
\begin{align*}
e^{\sfrac{1}{20n}-\sfrac{1}{(16n+1)}-\sfrac{1}{(4n+1)}}&\leq1\\
\tfrac{1}{20n}-\tfrac{1}{16n+1}-\tfrac{1}{4n+1}&\leq0\\
1&\leq336n^{2}+20n.
\end{align*}
Which clearly holds for all $n\geq1$.

\noindent\textit{Proof} (4).  The inequality follows directly by noting that $\frac{4n+3}{n-3}<4.002202$ is equivalent to $\nolinebreak{0<0.002202n-15.006606}$ which clearly holds for all $n\geq6815$.
\end{proof}

\begin{Lemma}
For all $n\geq 6818$ the following inequality holds:
\[\frac{0.054886}{2^{\frac{n}{2}}n^{\frac{3}{2}}}{\left(\frac{3125}{256}\right)}^{\frac{n}{6}}>{(5n)}^{\frac{2.51012\sqrt{5n}}{\log(5n)}}.\]
\end{Lemma}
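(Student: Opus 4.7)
The plan is to take natural logarithms on both sides and reduce the inequality to a standard calculus problem, which I would then settle by showing that a single real-valued function is positive at $n=6818$ and strictly increasing thereafter. The key simplification on the right is $(5n)^{2.51012\sqrt{5n}/\log(5n)}=\exp(2.51012\sqrt{5n})$, so after taking logs the claim becomes
\[
g(n):=\frac{n}{6}\log\frac{3125}{256}-\frac{n}{2}\log 2-\frac{3}{2}\log n+\log(0.054886)-2.51012\sqrt{5n}>0.
\]
The dominant contribution on the left is linear in $n$ with slope
\[
c:=\frac{1}{6}\log\frac{3125}{256}-\frac{1}{2}\log 2=\frac{5\log 5-11\log 2}{6},
\]
which a quick check shows is positive (roughly $0.0704$), whereas the $\sqrt n$ term grows only sublinearly. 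Hence $g(n)\to\infty$, and the entire question is whether the crossover has already occurred by $n=6818$.

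With this setup, the proof splits into two steps. First, I would verify the base case $g(6818)>0$ by direct numerical substitution, carrying enough decimal places of $\log(3125/256)$, $\log 2$, $\log(0.054886)$, and $\sqrt{34090}$ to confirm the small positive gap. Second, I would establish the monotonicity $g'(n)>0$ for $n\geq 6818$. Differentiating gives
\[
g'(n)=c-\frac{3}{2n}-\frac{2.51012\sqrt 5}{2\sqrt n},
\]
whose subtracted part is monotonically decreasing in $n$, so it suffices to check $g'(6818)>0$; a short calculation shows the negative part at the boundary is about $0.034$, comfortably below $c\approx 0.0704$. Combining the two steps yields $g(n)\geq g(6818)>0$ for every $n\geq 6818$, which is the claim.

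The main obstacle is the base case, not in principle but in precision: the constants $0.054886$ and $2.51012$ are tuned so that both sides of the log-form inequality are near $463$--$464$ at $n=6818$ with a gap well below $1$, so the numerical substitutions must be handled with several decimal places to guarantee strict positivity. The monotonicity step, by contrast, is very comfortable, since the slope $c$ dominates the subleading correction by roughly a factor of two even at the boundary and only improves as $n$ grows.
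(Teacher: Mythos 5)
Your proposal is correct and follows essentially the same route as the paper: take logarithms, reduce to a single-variable inequality, verify it numerically at $n=6818$, and conclude for larger $n$ by monotonicity. The only cosmetic difference is that the paper divides the logarithmic inequality through by $n$ and observes that the resulting right-hand side is termwise decreasing, whereas you keep the inequality in the form $g(n)>0$ and differentiate $g$ directly; both arguments are sound and your numerical estimates (a gap of well under $1$ at the base case, $g'(6818)\approx 0.036>0$) check out.
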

\begin{proof}The following are equivalent for $n\geq 6818$:
\[\frac{0.054886}{2^{\frac{n}{2}}n^{\frac{3}{2}}}{\left(\frac{3125}{256}\right)}^{\frac{n}{6}}>{(5n)}^{\frac{2.51012\sqrt{5n}}{\log(5n)}},\]

\[\log(0.054886)-\frac{n}{2}\log(2)-\frac{3}{2}\text{log}(n)+\frac{n}{6}\log(3125)-\frac{n}{6}\log(256)>2.51012\sqrt{5n},\]

\[\frac{n}{6}{\left[\log(3125)-3\log(2)-\log(256)\right]}>2.51012\sqrt{5n}+\frac{3}{2}\log(n)-\log(0.054886),\]

\[\frac{1}{6}\left[5\log(5)-11\log(2)\right]>\frac{2.51012\sqrt{5}}{\sqrt{n}}+\frac{3}{2}\left(\frac{\log(n)}{n}\right)-\frac{\log(0.054886)}{n}.\]

Now the right-hand side is decreasing in $n$ and so it suffices to verify the case when $n=6818$.  When $n=6818$, we obtain
\[\frac{1}{6}\left[5\log(5)-11\log(2)\right]>0.0742>\frac{2.51012\sqrt{5}}{\sqrt{6818}}+\frac{3}{2}\left(\frac{\log(6818)}{6818}\right)-\frac{\log(0.054886)}{6818}.\qedhere\]
\end{proof}

We now proceed with the proof of our main theorem for this section; that is, there is always a prime number between $4n$ and $5n$ for all integers $n>2$.

\begin{Theorem}
For any positive integer $n>2$ there is a prime number between $4n$ and $5n$.
\end{Theorem}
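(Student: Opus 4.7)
The plan is to follow the Erdős-style template indicated in the section preamble: factor $\binom{5n}{4n} = T_1 T_2 T_3$, bound $\binom{5n}{4n}$ from below and $T_1 T_2$ from above, and deduce $T_3 > 1$. The analytic argument will handle the tail $n \geq 6818$ (the threshold implicit in Lemmas 3.1 and 3.2); for the remaining range $3 \leq n < 6818$ I would finish by a finite verification, most compactly by exhibiting a chain of primes $13 = q_1 < q_2 < \cdots < q_m$ with $q_m > 5 \cdot 6817$ and every integer $n$ in the range lying in some interval $(q_i/5, q_i/4)$, which immediately produces a prime in $(4n,5n)$.

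For the lower bound on $\binom{5n}{4n}$ I would apply Lemma 2.1, using the lower Stirling form on $(5n)!$ and the upper form on $(4n)!$ and $n!$; the leading factor collapses to a universal constant times $(3125/256)^n/n^{3/2}$, and the exponential correction $\exp(1/(60n+1) - 1/(48n) - 1/(12n))$ is absorbed by Lemma 3.1(1). For $T_1$, the estimate $p^{\beta(p)} \leq 5n$ valid for every $p \leq \sqrt{5n}$, combined with the Chebyshev bound $\pi(x) \leq 1.25506\,x/\log x$ evaluated at $x = \sqrt{5n}$ (using $\log\sqrt{5n} = \tfrac{1}{2}\log(5n)$), yields $T_1 \leq (5n)^{2.51012\sqrt{5n}/\log(5n)}$, precisely the right-hand side of Lemma 3.2.

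The heart of the proof is the upper bound on $T_2$. For $p > \sqrt{5n}$ only $i = 1$ contributes to Legendre's formula, so $\beta(p) = \lfloor 5n/p\rfloor - \lfloor 4n/p\rfloor - \lfloor n/p\rfloor \in \{0,1\}$. Partitioning $(\sqrt{5n},4n]$ at the breakpoints $5n/k$ and $4n/k$, a short case check shows that $\beta$ vanishes identically on $(5n/2,4n]$, $(5n/3,2n]$, $(5n/4,4n/3]$, $(5n/6,n]$ and the analogous higher-order gaps, leaving only a finite collection of narrow surviving windows. On each survivor the subproduct $\prod p$ divides a ratio of consecutive integers expressible as a generalized binomial coefficient ${s \brace r}$ in the sense of Lemma 2.2; Lemma 2.1 converts each such quantity to closed form, Lemma 2.3 identifies the extremal comparison point $x = c/2$, and Lemma 3.1(2)--(3) absorbs the exponential Stirling errors. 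The two dominant survivors $(2n,5n/2]$ and $(4n/3,5n/3]$ combine via Stirling into $(3125/256)^{n/2 + n/3} = (3125/256)^{5n/6}$, while the aggregate of the smaller surviving windows together with the residual primes $p \leq n/2$ contribute at most $2^{n/2}$ through a Chebyshev $\vartheta$-type estimate. Collecting the three bounds reduces $T_3 > 1$ to exactly the master inequality of Lemma 3.2, finishing the case $n \geq 6818$. The main obstacle will be the combinatorial bookkeeping in this $T_2$ estimate: selecting the partition so that the surviving factors telescope into the $2^{n/2}(3125/256)^{5n/6}$ combination matching Lemma 3.2, and tracking the Stirling corrections tightly enough for Lemma 3.1 to absorb them.
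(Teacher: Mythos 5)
Your overall architecture is exactly the paper's: the $T_1T_2T_3$ factorization of $\binom{5n}{4n}$, the bound $T_1\leq (5n)^{2.51012\sqrt{5n}/\log(5n)}$ via Rosser--Schoenfeld, Stirling (Lemma 2.1) for the central binomial and for the generalized binomials $A={5n/2 \brace 2n}$ and $B={5n/3 \brace 4n/3}$, reduction to the master inequality of Lemma 3.2, and a finite verification for $3\leq n\leq 6817$. But one step of your $T_2$ estimate would fail as stated: you assert that only the two windows $(2n,5n/2]$ and $(4n/3,5n/3]$ feed the $(3125/256)^{5n/6}$ factor, while ``the aggregate of the smaller surviving windows together with the residual primes $p\leq n/2$ contribute at most $2^{n/2}$.'' Neither half of that is true. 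The primes in the single window $(n,5n/4]$ already have product of order $e^{n/4}\approx 1.284^n$, the surviving windows together account for roughly $e^{0.69n}$, and $\prod_{p\leq n/2}p$ is of order $e^{n/2}\approx 1.649^n$; all of these exceed $2^{n/2}\approx 1.414^n$, and even the legitimate Chebyshev bound at that cutoff, $\prod_{p\leq n/2}p<4^{n/2}=2^n$, is too big. The master inequality is razor thin --- per unit of $n$ it pits $e^{\frac16\log(3125/256)}\approx e^{0.417}$ against $e^{\frac12\log 2}\approx e^{0.347}$, a net margin of about $e^{0.070n}$ that at $n=6818$ barely clears the $\sqrt{n}$-sized error terms --- so replacing $2^{n/2}$ by anything like $2^n$ or $e^{0.69n}$ flips the conclusion.

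The idea your plan is missing is that \emph{every} surviving window, not just the two dominant ones, must be shown to consist of primes dividing $A$ or dividing $B$: this is what the explicit inequality chains in the paper accomplish (e.g.\ for $p\in(n/2,5n/8]$ one has $\frac n2<p<3p<2n<4p\leq\frac{5n}{2}<5p$, so $p$ contributes a factor to ${5n/2 \brace 2n}=A$). Since distinct primes each dividing $A$ have product dividing $A$, all of these windows are absorbed into the two quantities $A$ and $B$, each bounded once by Stirling; they cost nothing beyond the $(3125/256)^{n/2}$ and $(3125/256)^{n/3}$ already budgeted. The Chebyshev bound $\prod_{p\leq x}p<4^x$ is then applied only at the cutoff $x=n/4$ --- the point below which the case analysis stops --- which is precisely what produces $4^{n/4}=2^{n/2}$. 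With the cutoff corrected from $n/2$ to $n/4$ and the intermediate windows routed into $A$ and $B$ rather than into the $2^{n/2}$ term, your outline coincides with the paper's proof.
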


\begin{proof}It can be easily verified that for $n=3, 4, \ldots, 6817$ there is always a prime number between $4n$ and $5n$.  Now let $n\geq6818$ and consider:
\[\binom{5n}{4n}=\frac{(4n+1)(4n+2)\cdots(5n)}{1\cdot2\cdots n}.\]

The product of primes between $4n$ and $5n$, if there are any, divides $\binom{5n}{4n}$.  Following the notation used in \cite{1,3,5}, we let
\[T_{1}=\prod_{p\leq \sqrt{5n}}\mkern-6mu p^{\beta(p)},\quad T_{2}=\prod_{\sqrt{5n} < p\leq 4n}\mkern-18mu p^{\beta(p)},\quad\text{and } T_{3}=\prod_{4n+1\leq p\leq 5n}\mkern-20mu p\]
such that
\[\binom{5n}{4n}=T_{1}T_{2}T_{3}.\]

The prime decomposition of $\binom{5n}{4n}$ implies that the powers in $T_2$ are less than 2; see \cite[p.~24]{4} for the prime decomposition of $\binom{n}{j}$.  In addition, the prime decomposition of $\binom{5n}{4n}$ yields the upper bound for $T_{1}$:
\[T_{1}<{(5n)}^{\pi(\sqrt{5n})}.\]

But $\pi(x)\leq\frac{1.25506x}{\log(x)}$, see \cite{7}.  So we obtain

\[T_{1}<{(5n)}^{\pi(\sqrt{5n})}\leq{(5n)}^{\frac{2.51012\sqrt{5n}}{\log(5n)}}.\]

\noindent Now let $A={5n/2 \brace 2n}$ and $B={5n/3 \brace 4n/3}$ and observe the following for a prime number $p$ in $T_2$:

\noindent$\circ$ If $\frac{5n}{2}<p\leq 4n$, then
\[n<p\leq 4n<5n<2p.\]
\noindent Hence $\beta(p)=0$.

\noindent$\circ$ Clearly $\displaystyle\prod_{2n<p\leq \frac{5n}{2}}\mkern-12mu p$ divides $A$.

\noindent$\circ$ If $\frac{5n}{3}<p\leq2n$, then
\[n<p<2p\leq4n<5n<3p.\]
\noindent Hence $\beta(p)=0$.

\noindent$\circ$ Clearly $\displaystyle\prod_{\frac{4n}{3}<p\leq \frac{5n}{3}}\mkern-12mu p$ divides $B$.

\noindent$\circ$ If $\frac{5n}{4}<p\leq\frac{4n}{3}$, then
\[n<p<3p\leq4n<5n<4p.\]
\noindent Hence $\beta(p)=0$.

\noindent$\circ$ If $n<p\leq\frac{5n}{4}$, then
\[\frac{n}{2}<p<2n<2p\leq\frac{5n}{2}<3p.\]
\noindent Hence $\displaystyle\prod_{n<p\leq \frac{5n}{4}}\mkern-8mu p$ divides $A$.

\noindent$\circ$ If $\frac{5n}{6}<p\leq n$, then
\[p\leq n<2p<4p\leq 4n<5p\leq 5n<6p.\]
\noindent Hence $\beta(p)=0$.

\noindent$\circ$ If $\frac{2n}{3}<p\leq\frac{5n}{6}$, then
\[\frac{n}{3}<p<\frac{4n}{3}<2p\leq\frac{5n}{3}<3p.\]
\noindent Hence $\displaystyle\prod_{\frac{2n}{3}<p\leq \frac{5n}{6}}\mkern-12mu p$ divides $B$.

\noindent$\circ$ If $\frac{5n}{8}<p\leq\frac{2n}{3}$, then
\[p<n<2p<6p\leq 4n<7p<5n<8p.\]
\noindent Hence $\beta(p)=0$.

\noindent$\circ$ If $\frac{n}{2}<p\leq\frac{5n}{8}$, then
\[\frac{n}{2}<p<3p<2n<4p\leq\frac{5n}{2}<5p.\]
\noindent Hence $\displaystyle\prod_{\frac{n}{2}<p\leq \frac{5n}{8}}\mkern-8mu p$ divides $A$.

\noindent$\circ$ If $\frac{5n}{11}<p\leq\frac{n}{2}$, then
\[p<2p\leq n<3p<8p\leq 4n<9p<10p\leq 5n<11p.\]
\noindent Hence $\beta(p)=0$.

\noindent$\circ$ If $\frac{4n}{9}<p\leq\frac{5n}{11}$, then
\[\frac{n}{3}<p<2p<\frac{4n}{3}<3p<\frac{5n}{3}<4p.\]
\noindent Hence $\displaystyle\prod_{\frac{4n}{9}<p\leq \frac{5n}{11}}\mkern-12mu p$ divides $B$.

\noindent$\circ$ If $\frac{5n}{12}<p\leq\frac{4n}{9}$, then
\[p<2p<n<3p<9p\leq 4n<10p<11p<5n<12p.\]
\noindent Hence $\beta(p)=0$.

\noindent$\circ$ If $\frac{n}{3}<p\leq\frac{5n}{12}$, then
\[\frac{n}{3}<p<3p<\frac{4n}{3}<4p\leq\frac{5n}{3}<5p.\]
\noindent Hence $\displaystyle\prod_{\frac{n}{3}<p\leq \frac{5n}{12}}\mkern-8mu p$ divides $B$.

\noindent$\circ$ If $\frac{5n}{16}<p\leq\frac{n}{3}$, then
\[p<2p<3p\leq n<4p<12p\leq 4n<13p<14p<15p\leq 5n<16p.\]
\noindent Hence $\beta(p)=0$.

\noindent$\circ$ If $\frac{2n}{7}<p\leq\frac{5n}{16}$, then
\[p<\frac{n}{2}<2p<6p<2n<7p<8p\leq\frac{5n}{2}<9p.\]
\noindent Hence $\displaystyle\prod_{\frac{2n}{7}<p\leq \frac{5n}{16}}\mkern-12mu p$ divides $A$.

\noindent$\circ$ If $\frac{5n}{18}<p\leq\frac{2n}{7}$, then
\[p<2p<3p<n<4p<14p\leq 4n<15p<16p<17p<5n<18p.\]
\noindent Hence $\beta(p)=0$.

\noindent$\circ$ If $\frac{n}{4}<p\leq\frac{5n}{18}$, then
\[p<\frac{n}{2}<2p<7p<2n<8p<9p\leq\frac{5n}{2}<10p.\]
\noindent Hence $\displaystyle\prod_{\frac{n}{4}<p\leq \frac{5n}{18}}\mkern-8mu p$ divides $A$.

By the fact that $\displaystyle\prod_{p\leq x}p<4^x$, see \cite[p.~167]{4}, we obtain

\noindent$\circ$  $\displaystyle\prod_{\sqrt{5n}<p\leq \frac{n}{4}}\mkern-12mu p<4^{\frac{n}{4}}=2^{\frac{n}{2}}.$

Now, to summarize, we obtain

\vskip 10pt

\[T_{2}=\displaystyle\prod_{\sqrt{5n} < p\leq 4n}\mkern-14mu p^{\beta(p)}<2^{\frac{n}{2}}AB.\]

\vskip 20pt

By Lemma 2.1, we obtain

\vskip 10pt

\begin{align*}
\binom{5n}{4n}&=\frac{(5n)!}{(4n)!(n!)}\\
&>\frac{l(5n)}{u(4n)u(n)}\\
&=\sqrt{\frac{5}{8\pi n}}{\left(\frac{3125}{256}\right)}^{n}e^{\frac{1}{60n+1}-\frac{1}{48n}-\frac{1}{12n}}\\
&>0.446024n^{-\frac{1}{2}}{\left(\frac{3125}{256}\right)}^{n},
\end{align*}

\vskip 20pt

\noindent where the last inequality follows from the fact that $e^{\frac{1}{60n+1}-\frac{1}{48n}-\frac{1}{12n}}\geq 0.999986$ for all $n\geq6818$ by Lemma 3.1.

\vskip 20pt

Similarly, by Lemmas 2.1, 2.2, and 2.3, we obtain

\vskip 10pt

\begin{align*}
A&={\frac{5n}{2} \brace 2n}\leq \frac{5n}{2}\binom{[\frac{5n}{2}]}{2n}\\
&=\frac{5n}{2}\cdot\frac{[\frac{5n}{2}]!}{(2n)!([\frac{5n}{2}]-2n)!}\\
&<\frac{5n}{2}\cdot\frac{u([\frac{5n}{2}])}{l(2n)l([\frac{5n}{2}]-2n)}\\
&\leq\frac{5n}{2}\cdot\frac{u(\frac{5n}{2})}{l(2n)l(\frac{n}{2})}\\
&=\frac{5}{4}\sqrt{\frac{5n}{\pi}}{\left(\frac{3125}{256}\right)}^{\frac{n}{2}}e^{\frac{1}{30n}-\frac{1}{24n+1}-\frac{1}{6n+1}}\\
&<\frac{5}{4}\sqrt{\frac{5n}{\pi}}{\left(\frac{3125}{256}\right)}^{\frac{n}{2}}\\
&<1.576958n^{\frac{1}{2}}{\left(\frac{3125}{256}\right)}^{\frac{n}{2}},
\end{align*}
and finally
\begin{align*}
B&={\frac{5n}{3} \brace \frac{4n}{3}}\leq\frac{5n}{3}\binom{[\frac{5n}{3}]}{[\frac{4n}{3}]}\\
&=\frac{5n}{3}\cdot\frac{[\frac{4n}{3}]+1}{[\frac{5n}{3}]-[\frac{4n}{3}]}\cdot\binom{[\frac{5n}{3}]}{[\frac{4n}{3}]+1}\\
&\leq\frac{5n}{3}\cdot\frac{4n+3}{n-3}\cdot\frac{u(\frac{5n}{3})}{l(\frac{4n}{3})l(\frac{n}{3})}\\
&=\sqrt{\frac{125}{24\pi}}{\left(\frac{4n+3}{n-3}\right)}n^{\frac{1}{2}}{{\left(\frac{3125}{256}\right)}^{\frac{n}{3}}}\\
&\quad\quad\quad\cdot e^{\frac{1}{20n}-\frac{1}{16n+1}-\frac{1}{4n+1}}\\
&<5.153158n^{\frac{1}{2}}{{\left(\frac{3125}{256}\right)}^{\frac{n}{3}}},
\end{align*}
where the inequalities follow by Lemma 3.1.

Thus we obtain
\begin{align*}
{T_3}&=\binom{5n}{4n}\frac{1}{{T_2}{T_1}}\\
&>\binom{5n}{4n}\frac{1}{2^{\frac{n}{2}}AB}\cdot\frac{1}{{(5n)}^{\frac{2.51012\sqrt{5n}}{\log(5n)}}}\\
&>\frac{0.054886}{2^{\frac{n}{2}}n^{\frac{3}{2}}}{\left(\frac{3125}{256}\right)}^{\frac{n}{6}}\cdot\frac{1}{{(5n)}^{\frac{2.51012\sqrt{5n}}{\log(5n)}}}>1,
\end{align*}
where the last inequality follows by Lemma 3.2.  Consequently, the product $T_3$ of prime numbers between $4n$ and $5n$ is greater than 1 and therefore the existence of such numbers is proven.
\end{proof}

\section{Consequences}
With the proof of the previous theorem complete we may also show that there is always a prime number between $n$ and $(5n+15)/4$ for all positive integers $n>2$ as in the following theorem.

\begin{Theorem}
For any positive integer $n>2$ there exists a prime number $p$ satisfying $n<p<\frac{5(n+3)}{4}$.
\end{Theorem}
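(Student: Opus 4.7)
The plan is to deduce this from Theorem 3.3 by choosing a smaller integer $m$ for which the interval $(4m,5m)$ is contained in $(n,5(n+3)/4)$. The natural choice is $m=\lceil n/4\rceil$, the least integer satisfying $4m\geq n$.

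First I would handle the base cases $n=3,4,\ldots,8$ by exhibiting an explicit prime in each interval: for instance, $p=5$ works for $n=3,4$; $p=7$ for $n=5$; and $p=11$ for $n=6,7,8$. These must be treated separately because the corresponding $m=\lceil n/4\rceil$ is at most $2$, so Theorem 3.3 does not apply to $m$.

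For $n\geq 9$, I would set $m=\lceil n/4\rceil\geq 3$. Writing $n=4q+r$ with $r\in\{0,1,2,3\}$ and checking each residue class separately shows $m\leq (n+3)/4$; combined with $4m\geq n$, this yields the inclusion $(4m,5m)\subseteq(n,5(n+3)/4)$, since any $x$ in the open interval $(4m,5m)$ satisfies $n\leq 4m<x<5m\leq 5(n+3)/4$ with both middle inequalities strict. Applying Theorem 3.3 to $m>2$ then produces a prime $p$ with $4m<p<5m$, and hence $n<p<5(n+3)/4$ as required.

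The only subtle point is the rounding inequality $\lceil n/4\rceil\leq(n+3)/4$, which is an elementary case check across the four residues modulo $4$. Beyond this and the small-$n$ verifications where Theorem 3.3 is inapplicable, I do not anticipate any real obstacle: the heavy lifting has already been done in the previous section.
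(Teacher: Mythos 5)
Your proposal is correct and follows essentially the same route as the paper: round $n$ up to the nearest multiple of $4$ (your $m=\lceil n/4\rceil$ is exactly the paper's $(n+r)/4$ with $4\mid(n+r)$) and apply Theorem 3.3 to get a prime in $(4m,5m)\subseteq\bigl(n,\tfrac{5(n+3)}{4}\bigr)$. In fact you are slightly more careful than the paper, which only checks $n=3$ separately and overlooks that for $n=4,\ldots,8$ the resulting $m$ is at most $2$, where Theorem 3.3 does not apply; your explicit verification of those cases closes that small gap.
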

\begin{proof}When $n=3$, we obtain $3<5<7<\frac{15}{2}$.  Let $n\geq 4$.  By the division algorithm $4\mid(n+r)$ for some $r\in\{0,1,2,3\}$ and by Theorem 3.3 there exists a prime number $p$ such that $p\in\left(n+r,\frac{5(n+r)}{4}\right)$.

Since $\left(n+r,\frac{5(n+r)}{4}\right)$ is contained in $\left(n,\frac{5(n+3)}{4}\right)$ for all $0\leq r\leq 3$ and $n>2$,\\ $\nolinebreak{p\in\left(n,\frac{5(n+3)}{4}\right)}$ as desired.
\end{proof}
\newpage
\begin{Theorem}
For any positive integer $n>2$ there are at least four prime numbers between $n$ and $5n$.
\end{Theorem}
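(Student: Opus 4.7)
The plan is to obtain the four required primes by invoking one prime in each of the four disjoint sub-intervals $(n,2n)$, $(2n,3n)$, $(3n,4n)$, $(4n,5n)$, using the chain of results that led up to Theorem 3.3.  Specifically, I would first note that for $n>2$ (in particular $n>1$) Bertrand's postulate supplies a prime $p_1\in(n,2n)$; El Bachraoui's theorem \cite{1} supplies a prime $p_2\in(2n,3n)$; Loo's theorem \cite{5} supplies a prime $p_3\in(3n,4n)$; and Theorem 3.3, just proved, supplies a prime $p_4\in(4n,5n)$.

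The second step is the trivial but essential observation that the four intervals $(n,2n)$, $(2n,3n)$, $(3n,4n)$, $(4n,5n)$ are pairwise disjoint, so the primes $p_1,p_2,p_3,p_4$ are automatically distinct and all lie in $(n,5n)$.  Hence at least four primes lie between $n$ and $5n$ for every $n>2$.

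There is essentially no obstacle here: the hypothesis $n>2$ required by Theorem 3.3 is the most restrictive of the four hypotheses (Bertrand, Bachraoui, and Loo need only $n>1$), so no separate base-case check is needed beyond what is already absorbed into Theorem 3.3.  The only thing to be careful about is phrasing the conclusion so that "between $n$ and $5n$" is understood to mean the open interval $(n,5n)$, which is consistent with the convention used throughout the paper.  In short, this is a bookkeeping corollary of the four successive Bertrand-type results, and I would write it as a short paragraph citing \cite{1,5}, Bertrand's postulate, and Theorem 3.3, followed by the disjointness remark.
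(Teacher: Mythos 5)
Your proof is correct, but it is not the route the paper takes. You obtain the four primes by stacking the four known Bertrand-type results: Bertrand's postulate for $(n,2n)$, El Bachraoui \cite{1} for $(2n,3n)$, Loo \cite{5} for $(3n,4n)$, and Theorem 3.3 for $(4n,5n)$; disjointness of these open intervals then gives distinctness, and since $kn$ and $(k+1)n$ are composite for $n>1$ the closed-interval versions of the cited results cause no trouble at the shared endpoints. The paper instead makes the argument self-contained in its own Theorem 3.3: it first derives Theorem 4.1 (a prime in $\left(n,\frac{5(n+3)}{4}\right)$ for every $n>2$), applies that at $n$, $2n$, $3n$ and Theorem 3.3 at $4n$, and checks that for $n\geq 15$ the resulting intervals $\left(kn,\frac{5kn+15}{4}\right)$ are nested inside $(kn,(k+1)n)$ and hence pairwise disjoint; the cases $3\leq n\leq 14$ are verified by hand. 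Your version is shorter and needs no finite base-case check, at the price of importing three external theorems; the paper's version leans only on its own interval result (plus the division-algorithm trick behind Theorem 4.1) and incidentally locates the first three primes in narrower windows. Both arguments are valid proofs of the stated theorem.
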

\begin{proof}The cases when $n=3, 4, \ldots, 14$ may be verified directly.  Now let $n\geq15$ and by Theorem 4.1 we know there exists prime numbers $p_1$, $p_2$, and $p_3$ such that $n<p_1<\frac{5n+15}{4}$, $2n<p_2<\frac{10n+15}{4}$, $3n<p_3<\frac{15n+15}{4}$, and by Theorem 3.3 there exist a prime number $p_4$ such that $4n<p_4<5n$.  Hence
\[n<p_1<\frac{5n+15}{4}<2n<p_2<\frac{10n+15}{4}<3n<p_3<\frac{15n+15}{4}\leq4n<p_4<5n.\qedhere\]
\end{proof}

In the next theorem we improve on the number of prime numbers between $n$ and $5n$ to show that there are at least seven prime numbers between $n$ and $5n$ for $n>5$.

\begin{Theorem}
For all $n>5$ there are at least seven prime numbers between $n$ and $5n$.
\end{Theorem}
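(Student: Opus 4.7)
The plan is to produce seven pairwise disjoint open subintervals of $(n,5n)$, each guaranteed to contain a prime by a theorem already proved in the paper. For values of $n$ below an explicit threshold $N_{0}$, direct verification from a table of primes suffices, since $(n,5n)$ comfortably contains many more than seven primes in this range. For $n>N_{0}$ I would chain Theorem 4.1 six times starting from the left endpoint and then cap with Theorem 3.3 on the right.

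Concretely, set $a_{1}=n$ and define recursively $a_{k+1}=\lceil 5(a_{k}+3)/4\rceil$ for $k=1,\dots,5$. Each $a_{k}$ is an integer exceeding $2$, so Theorem 4.1 applied with parameter $a_{k}$ supplies a prime $p_{k}$ with $a_{k}<p_{k}<5(a_{k}+3)/4\le a_{k+1}$ for $k=1,\dots,6$. Theorem 3.3 then supplies a seventh prime $p_{7}\in(4n,5n)$, and the seven primes fit together as
\[n<p_{1}<a_{2}\le p_{2}<\cdots<a_{6}\le p_{6}<\tfrac{5(a_{6}+3)}{4}\le 4n<p_{7}<5n,\]
provided the disjointness condition $5(a_{6}+3)/4\le 4n$ holds.

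To verify this condition for all $n>N_{0}$, I would control the recursion by the substitution $c_{k}=a_{k}+19$: the ceiling bound $a_{k+1}\le (5a_{k}+19)/4$ rewrites as $c_{k+1}\le \tfrac{5}{4}c_{k}$, which telescopes to $a_{k}\le(5/4)^{k-1}(n+19)-19$. Plugging $k=6$ reduces the disjointness condition to a concrete linear inequality in $n$ that pins down $N_{0}$ (numerically, a few hundred). The main obstacle is purely bookkeeping---tracking the accumulated rounding from the ceilings tightly enough to keep $N_{0}$ small, so that the remaining finite case-check stays short. A stylistic variant that avoids Theorem 3.3 altogether is to chain Theorem 4.1 seven times and instead require $5(a_{7}+3)/4\le 5n$; this raises $N_{0}$ only slightly but keeps the argument uniform.
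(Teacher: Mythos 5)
Your proposal is correct and follows essentially the same route as the paper: the paper iterates Theorem 4.1 via $f(n)=\frac{5n+15}{4}$, showing $f^{7}(n)\le 5n$ for $n\ge 245$ and checking $6\le n\le 244$ directly, which is exactly your chaining argument. Your two deviations---taking ceilings so that Theorem 4.1 is only ever applied to integer arguments (the paper applies it to the generally non-integral $f^{m-1}(n)$ without comment), and replacing the final iteration by an appeal to Theorem 3.3---are minor refinements rather than a different method, though the first one does patch a small rigor gap in the paper's version.
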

\begin{proof}
The cases when $n=6,7,\ldots, 244$ may be verified directly.  Now let $f(n)=\frac{5n+15}{4}$ for $n\geq 245$ and let $f^m(n)=f(f^{m-1}(n))$.  By Theorem 4.1 there exists a prime number between $n$ and $f(n)$.  Furthermore, there exists a prime number between $f^{m-1}(n)$ and $f^m(n)$ for all $\nolinebreak{m\in\mathbb{N}\setminus\{1\}}$.  In general,
\[f^m(n)=\frac{1}{4^m}\left(5^m n+3\displaystyle\sum_{k=0}^{m-1}5^{m-k}\,4^k\right).\]

Consider
\[f^m(n)=\frac{1}{4^m}\left(5^m n+3\displaystyle\sum_{k=0}^{m-1}5^{m-k}\,4^k\right)\leq 5n.\]

Solving for $n$, we obtain
\[n\geq\frac{3}{5\cdot 4^m-5^m}\displaystyle\sum_{k=0}^{m-1}5^{m-k}\,4^k.\]

However, $5\cdot 4^m-5^m$ is positive only for $m\leq 7$.  So let $m=7$, then for
\[n\geq 245>\frac{3}{5\cdot 4^7-5^7}\sum_{k=0}^{6}5^{7-k}\,4^k\]
there are at least seven prime numbers between $n$ and $5n$ and our proof is complete.
\end{proof}

\begin{Theorem}
For $n>2$ the number of prime numbers in the interval $(4n, 5n)$ is at least
\[\log_{5n}\left[\frac{0.054886}{2^{\frac{n}{2}}n^{\frac{3}{2}}}{\left(\frac{3125}{256}\right)}^{\frac{n}{6}}{(5n)}^{-\frac{2.51012\sqrt{5n}}{\log(5n)}}\right].\]
\end{Theorem}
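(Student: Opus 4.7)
The plan is to obtain this count bound as a direct corollary of the proof of Theorem 3.3, by converting the multiplicative lower bound on $T_3$ assembled there into a lower bound on the number of primes via a one-line upper bound.

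First I would observe that every prime appearing in $T_3 = \prod_{4n+1 \leq p \leq 5n} p$ is at most $5n$, so if $N$ denotes the number of primes in $(4n,5n)$, then $T_3 \leq (5n)^N$, and consequently
\[N \geq \log_{5n}(T_3).\]
This reduces the theorem to producing the right lower bound for $T_3$, which is essentially already done.

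Next I would invoke the bound derived in the penultimate display of the proof of Theorem 3.3: for $n \geq 6818$,
\[T_3 > \frac{0.054886}{2^{n/2}n^{3/2}}\left(\frac{3125}{256}\right)^{n/6}(5n)^{-2.51012\sqrt{5n}/\log(5n)}.\]
Taking $\log_{5n}$ of both sides and chaining with the first step produces the theorem's inequality for all $n \geq 6818$.

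The remaining range $3 \leq n \leq 6817$ would be handled by direct computation, in the same spirit as the base-case check that opens the proof of Theorem 3.3. For most of that range the bracketed quantity is far less than $1$ (the $0.054886/(2^{n/2}n^{3/2})$ factor is tiny and the $(5n)^{-2.51012\sqrt{5n}/\log(5n)}$ factor is bounded by $1$), so its $\log_{5n}$ is negative and the trivial bound $N \geq 1$ already furnished by Theorem 3.3 suffices. The main obstacle will be the narrow band of $n$ just below $6818$ where the bracketed expression may approach $1$; here one must tabulate $N$ explicitly and compare, but since the relevant primes between $4n$ and $5n$ are easily listed for each individual $n$, this is a routine finite verification rather than a conceptual difficulty.
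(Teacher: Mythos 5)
Your proposal is correct and matches the paper's own argument: the paper likewise bounds each prime in $T_3$ above by $5n$ and takes $\log_{5n}$ of the lower bound for $T_3$ established in Theorem 3.3. You are in fact slightly more careful than the paper, which does not explicitly address the range $2<n<6818$ where the Theorem 3.3 estimate was derived only for $n\geq 6818$; your observation that the bracketed quantity is at most about $1$ there, so that $N\geq 1$ from Theorem 3.3 already suffices, closes that small gap.
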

\begin{proof}
In Theorem 3.3 we approximated the product of prime numbers between $4n$ and $5n$ from below by
\[\frac{0.054886}{2^{\frac{n}{2}}n^{\frac{3}{2}}}{\left(\frac{3125}{256}\right)}^{\frac{n}{6}}{(5n)}^{-\frac{2.51012\sqrt{5n}}{\log(5n)}}.\]
Bounding each of the prime numbers between $4n$ and $5n$ from above by $5n$, we obtain:
\begin{align*}
&\log_{5n}\left[\frac{0.054886}{2^{\frac{n}{2}}n^{\frac{3}{2}}}{\left(\frac{3125}{256}\right)}^{\frac{n}{6}}{(5n)}^{-\frac{2.51012\sqrt{5n}}{\log(5n)}}\right]\\
&\qquad\qquad=\frac{\log 0.054886-\frac{n}{2}\log 2+\frac{n}{6}\log \frac{3125}{256}}{\log n+\log 5}+\frac{\frac{3}{2}\log 5-2.51012\sqrt{5n}}{\log n+\log 5}-\frac{3}{2}\\
&\qquad\qquad>\frac{n\left(\frac{1}{6}\log \frac{3125}{256}-\frac{1}{2}\log 2-\frac{2.51012\sqrt{5}}{\sqrt{n}}\right)}{2\log n}+\frac{\frac{3}{2}\log 5+\log 0.054886}{\log n+\log 5}-\frac{3}{2}\\
&\qquad\qquad>\frac{n}{\log n}\left(0.035214-\frac{2.8063995}{\sqrt{n}}\right)-\frac{168033}{100000}.
\end{align*}

Now observe that $\displaystyle\lim_{n\to\infty}\frac{2.8063995}{\sqrt{n}}=0$.  Moreover $\displaystyle\lim_{n\to\infty}\frac{n}{\log n}=+\infty$.
\end{proof}

By Theorem 4.4 we have the following theorem.

\begin{Theorem}
As $n$ tends to infinity, the number of prime numbers in the interval $[4n, 5n]$ goes to infinity.  That is, for every positive integer $m$, there exists a positive integer $L$ such that for all $n\geq L$, there are at least $m$ prime numbers in the interval $[4n, 5n]$.
\end{Theorem}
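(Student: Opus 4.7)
The plan is to upgrade the explicit lower bound obtained en route to Theorem 4.4 into the asymptotic statement at hand. The proof of Theorem 4.4 already establishes that, for $n\geq 6818$, the number of primes in $(4n,5n)$ is bounded below by
\[\frac{n}{\log n}\left(0.035214-\frac{2.8063995}{\sqrt{n}}\right)-\frac{168033}{100000},\]
and since every prime in $(4n,5n)$ belongs to $[4n,5n]$, it will suffice to show that this expression tends to $+\infty$ with $n$ and then extract the $L=L(m)$ required by the statement.

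First I would fix an explicit threshold $n_{0}\geq 6818$ beyond which the parenthesised factor $0.035214-2.8063995/\sqrt{n}$ is bounded below by a fixed positive constant, say $c_{0}=0.01$. This amounts to the elementary requirement $\sqrt{n_{0}}\geq 2.8063995/(0.035214-c_{0})$, which is satisfied by any reasonably large $n_{0}$. For every $n\geq n_{0}$ the displayed bound then majorises the cleaner expression $c_{0}\,n/\log n-1.68033$.

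Next I would invoke the standard fact that $x/\log x$ is increasing for $x\geq e$ and tends to $+\infty$ as $x\to\infty$. Given any positive integer $m$, I can therefore select $L\geq n_{0}$ large enough that $c_{0}\,L/\log L-1.68033>m$; monotonicity then propagates this inequality to every $n\geq L$, giving at least $m$ primes in $[4n,5n]$, which is precisely the content of Theorem 4.5.

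The only genuine obstacle is bookkeeping: the two awkward constants inherited from Theorem 4.4 must be handled carefully when fixing $n_{0}$ and $L$, and one must not forget that the lower bound itself is valid only for $n\geq 6818$, so $L$ should be taken to be at least that large (cases below 6818 play no role since the statement concerns the tail). Beyond this, no new prime-counting or factorial estimates are required — the argument is a routine limit computation piggy-backing on the quantitative work already done.
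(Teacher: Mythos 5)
Your proposal is correct and follows essentially the same route as the paper: the paper derives Theorem 4.5 directly from the explicit lower bound established in the proof of Theorem 4.4, noting that $\frac{2.8063995}{\sqrt{n}}\to 0$ and $\frac{n}{\log n}\to+\infty$. You merely make the extraction of $L(m)$ more explicit (fixing $c_0$, using monotonicity of $x/\log x$, and correctly restricting to $n\geq 6818$ where the bound is actually valid), which is a sound and slightly more careful writeup of the same argument.
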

 
\makeatletter
\renewcommand{\@biblabel}[1]{[#1]\hfill}
\makeatother

\end{document}